\documentclass[final, 11pt]{article}


\def\showkeys{0}
\def\showdraftbox{0}

\usepackage{bm}
\usepackage{bbm}

\usepackage{amsthm}

\usepackage{xspace,xcolor,enumerate}
\usepackage{amsmath,amsfonts,amssymb}
\usepackage{color,graphicx}

\ifnum\showkeys=1
\usepackage[color]{showkeys}
\fi

\definecolor{darkred}{rgb}{0.5,0,0}
\definecolor{darkgreen}{rgb}{0,0.5,0}
\definecolor{darkblue}{rgb}{0,0,0.5}

\usepackage[pdfstartview=FitH,pdfpagemode=None,colorlinks,linkcolor=darkred,filecolor=blue,citecolor=darkred,urlcolor=darkred,pagebackref]{hyperref}

\usepackage{dsfont}

\usepackage[capitalise,nameinlink]{cleveref}

\setlength{\topmargin}{-1 in} \setlength{\oddsidemargin}{0 in}
\setlength{\evensidemargin}{0 in} \setlength{\textwidth}{6.5 in}
\setlength{\textheight}{9 in} \setlength{\headsep}{0.75 in}
\setlength{\parindent}{0 in} \setlength{\parskip}{0.05 in}

\ifnum\showdraftbox=1

\else

\fi

\newtheorem{theorem}{Theorem}[section]

\crefname{conjecture}{Conjecture}{Conjectures}

\crefname{definition}{Definition}{Definitions}
\newtheorem{lemma}[theorem]{Lemma}
\crefname{lemma}{Lemma}{Lemmas}

\crefname{remark}{Remark}{Remarks}
\newtheorem{proposition}[theorem]{Proposition}
\crefname{proposition}{Proposition}{Propositions}
\newtheorem{corollary}[theorem]{Corollary}
\crefname{corollary}{Corollary}{Corollaries}

\crefname{obs}{Observation}{Observations}
\newtheorem{claim}[theorem]{Claim}
\crefname{claim}{Claim}{Claims}

\crefname{fact}{Fact}{Facts}

\crefname{openprob}{Open Problem}{Open Problems}

\crefname{remk}{Remark}{Remarks}

\crefname{example}{Example}{Examples}


\def\epsilon{\varepsilon}

\def\phi{\varphi}




\newcommand{\R}{{\mathbb R}}


\usepackage{nicefrac}



%

%






\newfont{\inhead}{eufm10 scaled\magstep1}








\usepackage[top=1in,bottom=1in,left=1.25in,right=1.25in]{geometry}
\usepackage{color,graphicx}

\ifnum\showkeys=1
\usepackage[color]{showkeys}
\fi

\definecolor{darkred}{rgb}{0.5,0,0}
\definecolor{darkgreen}{rgb}{0,0.5,0}

\newcommand{\cube}{\{0,1\}^n}

\newcommand{\Hit}{h}

\topmargin 0in
\headheight 0in
\headsep 0in
\textheight 8.8in
\textwidth 6.3in
\oddsidemargin 0in
\evensidemargin 0in
\headheight 24pt
\headsep 0.25in
\hoffset=0.3cm
\parindent 0.0in

\title{An Asymptotically Tight Bound on the Number of Relevant Variables in a Bounded Degree Boolean Function} 

\author{John Chiarelli\footnote{{\tt jlc450@math.rutgers.edu}, Department of Mathematics, Rutgers University}\and Pooya Hatami\footnote{{\tt pooyahat@gmail.com}, Department of Computer Science, University of Texas at Austin, supported by a Simons Investigator Award (\#409864, David Zuckerman)} \and Michael Saks\footnote{{\tt saks@math.rutgers.edu}, Department of Mathematics, Rutgers University, supported in part by the Simons Foundation under award 332622}}

\begin{document}

\maketitle

\begin{abstract}
We prove that there is a constant $C\leq 6.614$ such that every Boolean function of degree at most $d$ (as a polynomial over $\mathbb{R}$)
is a $C\cdot 2^d$-junta, i.e. it depends on at most $C\cdot 2^d$ variables. This improves 
the $d\cdot 2^{d-1}$ upper bound of Nisan and Szegedy [Computational Complexity 4 (1994)]. 

The bound of $C\cdot 2^d$ is tight up to the constant $C$ as a lower bound of $2^d-1$ is achieved by a read-once decision tree of depth $d$. 
 We slightly improve the lower bound by constructing, for each positive integer $d$, a function
of degree $d$ with $3\cdot 2^{d-1}-2$ relevant variables. A similar construction was independently observed by Shinkar and Tal.
\end{abstract}

\section{Introduction}
The \emph{degree} of a  Boolean function $f:\{0,1\}^n\rightarrow\{0,1\}$, denoted $\deg(f)$, is the minimum degree of  a polynomial in $\R[x_1,...,x_n]$ that agrees with $f$ on all  inputs from $\{0,1\}^n$. (It is well known that every Boolean function
has a unique representation over the reals, called the {\em multilinear representation}, of the form
$\sum_{S \subseteq [n]} a_S\prod_{i \in S}x_i$, and that $\deg(f)$ is the degree of the multilinear
representation of $f$.) 
Minsky and Papert~\cite{MP88} initiated the study of combinatorial and computational
properties of Boolean functions based on their representation by polynomials.   
We refer the reader to the excellent book of O'Donnell~\cite{Ryan14} on analysis of Boolean functions, and surveys \cite{BuhrmanD02, hatami2011variations} discussing relations between various complexity measures of Boolean functions. 


An input variable $x_i$  is \emph{relevant} for a Boolean function $f$ if it appears in a monomial of the multilinear representation of $f$
with nonzero coefficient. Let $R(f)$ denote the number of relevant
variables of $f$. We say that $f$ is a {\em $t$-junta} if $R(f) \leq t$. Nisan and Szegedy~\cite{NS94}, proved  that $R(f)$ is at most
at most $\deg(f)\cdot 2^{\deg(f)-1}$.

Let $R_d$ denote the maximum of $R(f)$ over Boolean functions $f$ of degree at most $d$, and let $C_d=R_d2^{-d}$.
By the result of Nisan and Szegedy, $C_d \leq d/2$.
On the other hand,  $R_d\geq 2R_{d-1}+1$, since if $f$ is a degree $d-1$ Boolean function
with $R_{d-1}$ relevant variables, and $g$ is a copy of $f$ on disjoint variables, and $z$ is a new variable
then  $zf+(1-z)g$ is a degree $d$ Boolean function with $2R_{d-1}+1$ relevant variables.   Thus  $C_d \geq C_{d-1}+2^{-d}$,
and so $C_d \geq 1-2^{-d}$.  Since
$C^d$ is an increasing function of $d$ it approaches a (possibly infinite) limit $C^* \geq 1$.  

In this paper we prove:

\begin{theorem}\label{thm:main} 
There is a positive constant $C$ so that $R(f)2^{-\deg(f)}\leq C$ for all Boolean functions $f$,
and thus $C_d \leq C$ for all $d \geq 0$.  In particular $C^*$ is finite.
\end{theorem}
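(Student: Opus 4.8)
The plan is to bound the number of relevant variables of a degree-$d$ Boolean function $f$ by controlling, for each relevant variable $x_i$, the ``influence'' or discrete derivative $\partial_i f = f|_{x_i=1} - f|_{x_i=0}$, which is itself a Boolean-valued-on-$\{0,1\}$ polynomial of degree at most $d-1$. The first step is to find a restriction of $f$ to a subcube that isolates the behavior of many relevant variables simultaneously. Concretely, I would look for a set $S$ of relevant variables and a restriction $\rho$ of the variables outside $S$ such that $f|_\rho$ still depends on a constant fraction of $S$; a greedy or probabilistic argument (random restrictions, each remaining variable kept with constant probability) should work here, since $\partial_i f$ is a nonzero low-degree polynomial and hence is nonzero on a noticeably large fraction of inputs (by a degree/anticoncentration bound such as the Schwartz--Zippel-type estimate for Boolean functions, or the fact that a nonzero degree-$k$ Boolean function takes the value $1$ on at least a $2^{-k}$ fraction of inputs).

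The core of the argument is an iterative/recursive decomposition. Pick a relevant variable, say $x_1$; then $f = x_1 \cdot \partial_1 f + f|_{x_1 = 0}$, where both $\partial_1 f$ and $f|_{x_1=0}$ are Boolean functions of degree at most $d-1$ and at most $d$ respectively. The naive recursion $R_d \le R_{d-1} + R_{d-1}$ only recovers Nisan--Szegedy. To do better one must exploit \emph{overlap}: many variables relevant for $f$ are relevant for \emph{both} pieces, or one must choose the splitting variable cleverly so that one of the two sub-functions has substantially fewer than $R_{d-1}$ relevant variables. I expect the right move is to choose $x_i$ to be a variable of \emph{maximum influence}, argue that such a variable has influence bounded below by a function of $R(f)$ and $d$ (an edge-isoperimetry or Fourier-tail input), and then show that passing to $\partial_i f$ or to an appropriate restriction kills a constant fraction of the remaining relevant variables while dropping the degree by one. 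Chaining this $d$ times would give $R(f) \le C \cdot 2^d$ with an explicit constant coming from the geometric series $\sum_j (\text{fraction})^{-1} 2^{-j}$.

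To make the ``constant fraction'' claim precise I would set up a potential function, e.g. $\Phi(f) = R(f) \cdot 2^{-\deg(f)}$, and show $\Phi$ does not increase by more than a bounded additive amount under the chosen reduction; summing a convergent series then yields the uniform bound $C$. The key quantitative lemma will be something like: if $f$ has degree $d$ and $R(f) = r$, then some variable $x_i$ has $R(\partial_i f) \ge r/2 - O(1)$ together with $R(f|_{x_i=c}) \le r - R(\partial_i f)$ for a suitable $c$, or alternatively a statement bounding $\sum_i \mathbf{1}[x_i \text{ relevant for } \partial_j f]$ from below on average over $j$. A double-counting identity — counting pairs $(i,j)$ with $x_i$ relevant for $\partial_j f$, which by symmetry of mixed derivatives equals counting pairs with $x_j$ relevant for $\partial_i f$ — looks like the natural engine for this.

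The main obstacle, and where I expect the real work to lie, is proving that a \emph{single} well-chosen reduction step destroys a constant fraction of relevant variables rather than just one. Nisan--Szegedy's bound is exactly what you get if each step can only be guaranteed to remove one variable while halving a $2^d$ budget; beating it to $O(2^d)$ requires showing that low-degree Boolean functions cannot have their relevant variables ``spread out'' so thinly that every derivative or restriction touches almost all of them. Establishing this rigidity — likely via an anticoncentration or isoperimetric inequality for bounded-degree Boolean functions, controlling how the relevant-variable sets of the $d$ first-order derivatives can overlap — is the crux, and getting the constant down to $6.614$ will require tracking these overlaps with some care rather than through the crudest union bound.
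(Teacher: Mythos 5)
Your proposal is a plan rather than a proof, and the step you yourself identify as ``the crux'' --- that one can always choose a variable (e.g.\ of maximum influence) whose restriction or derivative kills a \emph{constant fraction} of the relevant variables while the degree drops by one --- is exactly what is missing; nothing in the proposal establishes it, and no known influence/isoperimetry bound hands it to you directly. Moreover, the specific auxiliary inequality you float, $R(f|_{x_i=c})\le r-R(\partial_i f)$ for a suitable $c$, is false in general because the relevant sets of $\partial_i f$ and of the restrictions can overlap: for $f=x_2\wedge(x_1\oplus x_3)$ one has $r=3$ and $R(\partial_1 f)=2$, yet both $f|_{x_1=0}$ and $f|_{x_1=1}$ still depend on two variables. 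The double-counting identity (symmetry of ``$x_i$ relevant for $\partial_j f$'') is true but is not by itself an engine for the constant-fraction claim, so the chain of reductions you describe never gets off the ground quantitatively; as you note, without it one only recovers the Nisan--Szegedy recursion.

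The paper closes this gap with two ideas absent from your sketch. First, instead of influence it assigns each variable the weight $w_i(f)=2^{-\deg_i(f)}$, where $\deg_i(f)$ is the largest degree of a monomial containing $x_i$, and shows by an extremal argument (replacing a variable of submaximal $\deg_i$ by an AND of two fresh variables) that the maximum total weight $W_d$ equals $C_d=R_d2^{-d}$; so it suffices to bound the weight, which behaves subadditively under restrictions. Second, rather than restricting one cleverly chosen variable, it restricts an entire \emph{maxonomial hitting set} $H$: every variable in $H$ has weight exactly $2^{-d}$, and after fixing all of $H$ (in all $2^{|H|}$ ways, averaged) the degree drops to at most $d-1$, giving $W_d\le W_{d-1}+|H|2^{-d}$. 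The point that makes this work is that $h(f)\le \deg(f)^3$ (via the Nisan--Smolensky bound $h(f)\le \deg(f)\,bs(f)$, $bs(f)\le 2\deg(f)^2$, and a composition trick to remove the factor $2$), so the per-step cost $h_d2^{-d}\le d^32^{-d}$ is summable and $C_d\le\sum_{i\le d} i^32^{-i}$ stays bounded. In short: the progress measure is not ``a constant fraction of relevant variables dies per step'' (which the paper never proves and which may be hard or false as a per-step statement), but ``degree drops by one at an additive weight cost that is only $\mathrm{poly}(d)\cdot 2^{-d}$''; identifying a small hitting set for the top-degree monomials is the missing idea your approach needs.
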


Throughout this paper we use $[n]=\{1,\ldots,n\}$ for the index set of the variables to Boolean function $f$.
A {\em maxonomial} of $f$ is a set  $S \subseteq [n]$ of size $\deg(f)$ for which $\prod_{i \in S}x_i$ has a nonzero coefficient
in the multilinear representation of $f$. A {\em maxonomial hitting set} is a subset $H \subseteq [n]$ that intersects
every maxonomial.    Let $h(f)$ denote the minimum size of a maxonomial hitting set for $f$
and let $\Hit_d$ denote the maximum of $h(f)$ over Boolean functions of degree $d$.
Our key lemma, proved in \cref{sec:key lemma} is:

\begin{lemma}\label{lemma:main}
$$C_d- C_{d-1} \leq \Hit_d 2^{-d},$$
\end{lemma}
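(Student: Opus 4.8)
The plan is to prove the equivalent bound $R(f)\le 2R_{d-1}+h(f)$ for every Boolean function $f$ of degree exactly $d$. This suffices: taking an $f$ with $R(f)=R_d$ — which has degree exactly $d$, since $R_d\ge 2R_{d-1}+1>R_{d-1}$ rules out degree $<d$ — gives $R_d\le 2R_{d-1}+h(f)\le 2R_{d-1}+\Hit_d$, which is precisely $C_d-C_{d-1}=2^{-d}(R_d-2R_{d-1})\le \Hit_d\,2^{-d}$ after dividing by $2^d$.

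So fix a Boolean $f$ of degree $d$ and a minimum maxonomial hitting set $H$, so $|H|=h(f)$, and partition the relevant variables of $f$ into three groups: (i) those lying in $H$; (ii) those appearing in at least one monomial of $f$ disjoint from $H$; and (iii) the remaining ones, i.e.\ relevant variables $j\notin H$ such that every monomial of $f$ containing $j$ meets $H$. Group (i) contributes at most $h(f)$. For group (ii), the key elementary observation is that for \emph{every} assignment $\rho$ to the variables of $H$ the restriction $f|_{x_H=\rho}$ has degree at most $d-1$: a degree-$d$ monomial of $f$ is a maxonomial, hence meets $H$, hence loses at least one variable under the restriction, while all smaller monomials only drop further in degree. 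In particular setting the $H$-variables to $0$ yields a Boolean $g_0$ of degree $\le d-1$ whose monomials are exactly the monomials of $f$ disjoint from $H$; since no cancellation occurs, the set of relevant variables of $g_0$ is precisely group (ii), so group (ii) has size at most $R_{d-1}$.

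The main obstacle is to bound group (iii) by $R_{d-1}$ as well. The natural plan is to produce a single Boolean function of degree $\le d-1$ whose relevant set contains all of group (iii): since each such $j$ co-occurs with some variable of $H$, an appropriate restriction $f|_{x_H=\rho}$ — automatically of degree $\le d-1$, as above — makes $j$ relevant, so some restriction works for each individual $j$. The difficulty is that different variables of group (iii) may only become relevant under different assignments $\rho$, and that combining monomials under a restriction may cancel out coefficients and destroy relevance, so naively one only gets the bound $2^{h(f)}R_{d-1}$ on group (iii). Obtaining the factor $2$ rather than $2^{h(f)}$ must exploit the minimality of $H$ — for instance that every $i\in H$ lies in a maxonomial whose only element of $H$ is $i$ — either to show that group (iii) is covered by one carefully chosen restriction, or to run an induction on $h(f)$ that eliminates one variable of $H$ per step while losing at most one relevant variable (the base case $h(f)=1$ being immediate: a single $v$ then lies in every maxonomial, so $f|_{x_v=0},f|_{x_v=1}$ have degree $\le d-1$ and $R(f)\le 1+R(f|_{x_v=0})+R(f|_{x_v=1})\le 1+2R_{d-1}$). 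A secondary subtlety to watch is that the discrete derivatives $\partial_i f$ for $i\in H$, which are exactly what ``sees'' group (iii), are only $\{-1,0,1\}$-valued rather than Boolean, so the bound $R_{d-1}$ does not apply to them directly; handling this cleanly is part of the same obstacle.
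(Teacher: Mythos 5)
Your reformulation of the lemma as $R_d\le 2R_{d-1}+\Hit_d$ is correct, and your treatment of groups (i) and (ii) (and of the base case $h(f)=1$) is fine, but these are the easy parts. The entire difficulty of the lemma sits in what you call group (iii), and there you do not give a proof: you describe the obstacle (different variables of group (iii) may require different restrictions of the $H$-variables, and cancellations can destroy relevance), observe that the naive argument only yields $2^{h(f)}R_{d-1}$, and then offer two speculative routes (a single well-chosen restriction, or an induction on $h(f)$ losing one relevant variable per step) without carrying either out. Neither route is known to work, and it is not clear that the per-variable counting quantity $R$ admits any such direct argument: a variable of group (iii) is only ``visible'' through derivatives in the $H$-directions, and under any single restriction of the $H$-variables its occurrences can cancel, which is exactly why one picks up a factor exponential in $|H|$ when counting relevant variables directly. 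So as written this is a proof plan with an acknowledged hole at its central step, not a proof.

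The paper's proof avoids this obstruction by not counting relevant variables at all. Each variable is given weight $w_i(f)=2^{-\deg_i(f)}$, where $\deg_i(f)$ is the largest degree of a monomial containing $x_i$, and one works with $W(f)=\sum_i w_i(f)$. Two facts replace your group-(iii) step. First, an averaging inequality (proved by a short case analysis that in particular handles the cancellation issue you worry about): for any set $J$ and any $i\notin J$, $w_i(f)\le 2^{-|J|}\sum_{\alpha\in PA(J)}w_i(f_\alpha)$, the sum being over all $2^{|J|}$ assignments to the $J$-variables. Taking $J=H$ a minimum maxonomial hitting set, every restriction $f_\alpha$ has degree $\le d-1$, and the factor $2^{-|H|}$ exactly compensates for summing over all $2^{|H|}$ restrictions, giving $W(f)\le |H|2^{-d}+W_{d-1}$ with no exponential loss. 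Second, an extremal identity $W_d=C_d$: for a maximizer of $W$ one may replace any variable with $\deg_i(f)<d$ by an AND of two fresh variables, preserving degree and weight while increasing the number of relevant variables, so at the extremum every relevant variable has $\deg_i(f)=d$ and the weight bound converts back into the junta bound. This potential-function device is the missing idea in your approach; without it (or something playing the same role), the bound on group (iii) remains unproven.
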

which immediately implies
$C_d \leq \sum_{i=1}^d \Hit_i 2^{-i}$.

We also have:

\begin{lemma}
\label{lemma:h_d}
For any boolean function $f$, $h(f) \leq d(f)^3$ and so
for all $i \geq 1$ $\Hit_i \leq  i^3$. 
\end{lemma}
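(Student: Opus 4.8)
The plan is to deduce the lemma from the quadratic relation between block sensitivity and degree. Fix a Boolean function $f$ of degree $d:=\deg(f)\ge 1$ (the degree-$0$ case is degenerate and is not needed for the consequence about $\Hit_i$). Let $\nu(f)$ denote the largest size of a family of pairwise disjoint maxonomials of $f$. I will prove (i) $\nu(f)\le\mathrm{bs}(f)$, where $\mathrm{bs}(\cdot)$ is block sensitivity, and (ii) $h(f)\le d\cdot\nu(f)$; combined with the Nisan--Szegedy bound $\mathrm{bs}(f)=O(\deg(f)^2)$ these give $h(f)=O(d^3)$, and hence $\Hit_i\le i^3$.

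The engine behind (i) is the following structural fact. If $S$ is a maxonomial then $|S|=d$ and the coefficient $a_S$ of the monomial $\prod_{i\in S}x_i$ in the multilinear representation of $f$ is nonzero. Write $\partial_i g=g|_{x_i=1}-g|_{x_i=0}$ for the discrete derivative and, for $S=\{i_1,\dots,i_d\}$, let $\partial_S=\partial_{i_1}\cdots\partial_{i_d}$. Then $\partial_S f$ is a multilinear polynomial of degree at most $d-|S|=0$; expanding $\partial_S f=\sum_{T\supseteq S}a_T\prod_{i\in T\setminus S}x_i$ and using that every monomial $T$ of $f$ satisfies $|T|\le d=|S|$, the only surviving term is $T=S$, so $\partial_S f\equiv a_S$. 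Since $a_S\neq 0$ and $d\ge 1$, for every input $x\in\{0,1\}^n$ the restriction of $f$ to the subcube $\{z:z_i=x_i\text{ for all }i\notin S\}$ is non-constant: its $S$-derivative is the nonzero constant $a_S$, whereas the $S$-derivative of a constant function is $0$. Therefore $f$ attains the value $1-f(x)$ at some point of that subcube, i.e. there is a nonempty $B\subseteq S$ with $f(x\xor\one_B)\neq f(x)$, where $\one_B$ is the indicator vector of $B$.

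Given this, suppose $S_1,\dots,S_m$ are pairwise disjoint maxonomials. Applying the fact to each $S_j$ at the single common input $x=\zero$ yields nonempty $B_j\subseteq S_j$ --- automatically pairwise disjoint --- with $f(\zero\xor\one_{B_j})\neq f(\zero)$ for all $j$. Thus $\zero$ exhibits $m$ pairwise disjoint sensitive blocks, so $\mathrm{bs}(f)\ge m$; taking $m=\nu(f)$ proves (i). For (ii), let $S_1,\dots,S_{\nu(f)}$ be a maximum family of pairwise disjoint maxonomials and set $H=\bigcup_j S_j$; by maximality every maxonomial intersects $H$, so $H$ is a maxonomial hitting set, whence $h(f)\le|H|=d\cdot\nu(f)$. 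Putting the pieces together, $h(f)\le d\cdot\mathrm{bs}(f)=O(d^3)$ by the Nisan--Szegedy inequality; tracking the absolute constant (and handling small $d$ directly) gives the bound $d^3$ in the statement, and $\Hit_i\le i^3$ is the case $\deg(f)=i$.

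The main obstacle is the uniformity in the structural fact. It would not suffice to know that each maxonomial hides a sensitive block at some input of its own; what is needed is a single input --- here $\zero$ --- that simultaneously witnesses a sensitive block inside every one of the disjoint maxonomials $S_1,\dots,S_m$, and this is exactly what the identity $\partial_S f\equiv a_S\neq 0$ supplies, since it forces $f$ to be non-constant on the $S$-subcube through every point. The two remaining ingredients --- that a maximal family of pairwise disjoint maxonomials is a hitting set, and the polynomial bound $\mathrm{bs}(f)=O(\deg(f)^2)$ of Nisan and Szegedy --- are standard, with only the latter invoked from outside the elementary facts about multilinear representations recalled in the introduction.
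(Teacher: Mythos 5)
Your derivation of $h(f)\le \deg(f)\cdot \mathrm{bs}(f)$ is correct, and it is essentially the Nisan--Smolensky argument that the paper also relies on: for a maxonomial $S$ the identity $\partial_S f\equiv a_S\neq 0$ forces a sensitive block inside $S$ at every input, disjoint maxonomials therefore yield disjoint sensitive blocks at a single input, and the union of a maximal disjoint family of maxonomials is a hitting set of size at most $\deg(f)\cdot\nu(f)$. The gap is in the last step. The Nisan--Szegedy inequality you can actually quote is $\mathrm{bs}(f)\le 2\deg(f)^2$, so your chain gives only $h(f)\le 2\deg(f)^3$. The factor $2$ is not an artifact of loose bookkeeping that can be recovered by ``tracking the absolute constant,'' and ``handling small $d$ directly'' does not help, since the factor $2$ persists for every $d$; eliminating it is precisely the nontrivial content of the lemma beyond what is implicit in prior work.

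The paper closes this gap with an amplification argument that your proposal is missing: it proves that both degree and maxonomial hitting set size are multiplicative under composition, $\deg(f\circ g)=\deg(f)\deg(g)$ and $h(f\circ g)=h(f)h(g)$ (the inequality $h(f\circ g)\ge h(f)h(g)$ needs a short separate argument), and then, supposing some degree-$d$ function had $h(f)\ge d^3+1$, applies the $2\deg^3$ bound to the $d^3$-fold self-composition $F$, for which $h(F)=(d^3+1)^{d^3}>2(d^3)^{d^3}=2\deg(F)^3$, a contradiction. Alternatively you could cite the sharpened bound $\mathrm{bs}(f)\le\deg(f)^2$ (due to Tal), which would give $h(f)\le\deg(f)^3$ directly; but that sharpening is itself obtained by the same composition trick, so some amplification step of this kind must appear either in your proof or in what you cite. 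As written, the concluding sentence asserts the constant-free bound without proof, so the argument is incomplete exactly where the lemma goes beyond $h(f)\le 2\deg(f)^3$.
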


This is proved in  ~\cref{sec:h_d}.  As explained there the $h(f) \leq 2d(f)^3$ is implicit in previous work,
and an additional argument eliminates the factor of 2.

Using ~\cref{lemma:h_d}, the summation in the upper bound of ~\cref{lemma:main} converges and
\cref{thm:main} follows.

Once we establish that $C^*$ is finite, it is interesting to obtain upper and lower bounds on $C^*$.
The best bounds we know are $3/2 \leq C^* \leq 6.614$.  We discuss these bounds in
\cref{sec:bounds}.  

Filmus and Ihringer~\cite{FI18} recently considered an analog of the parameter $R(f)$ for the family of 
{\em level $k$ slice functions} which are Boolean functions whose domain is restricted to the
set of inputs of Hamming weight exactly $k$.  They showed that, provided that $\min(k,n-k)$ is sufficiently
large (at least
$B^d$ for some fixed constant $B$) then every level $k$ slice function on $n$-variables of degree at most $d$ depends on at most
$R_d$ variables.  (See \cite{FI18} for precise definitions and details.) Thus our improved upper bound on $R_d$ applies also to
the number of relevant variables of slice functions.

\section{Proof of \cref{lemma:main}}
\label{sec:key lemma}
Similar to Nisan and Szegedy, we upper bound $R(f)$ 
by assigning a weight to each variable, and bounding the total weight of all variables. 
The weight of a variable used by Nisan and Szegedy is its \emph{influence} on $f$; the novelty of our approach is to use a different weight function.

For a variable $x_i$, let $\deg_i(f)$
be the maximum degree among all monomials that contain $x_i$ and have nonzero coefficient  in the multilinear representation of $f$. Let $w_i(f):=2^{-\deg_i(f)}$.
The weight of $f$, $W(f)$ is $\sum_i w_i(f)$ and  
$W_d$ denotes the maximum of $W(f)$ over all Boolean functions $f$ of degree at most $d$.

\begin{proposition}
\label{prop:W_d}
For all $d \geq 1$, $C_d=W_d$.
\end{proposition}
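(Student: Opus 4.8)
The plan is to prove the two inequalities $W_d \le C_d$ and $C_d \le W_d$ separately, with the first being nearly immediate and the second requiring a small construction.

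For the inequality $W_d \le C_d$: given any Boolean function $f$ of degree at most $d$, I would compare $W(f)$ with $R(f)$. For every relevant variable $x_i$ we have $1 \le \deg_i(f) \le d$, so $w_i(f) = 2^{-\deg_i(f)} \le 2^{-1}$; summing, $W(f) = \sum_{i : x_i \text{ relevant}} w_i(f)$. This alone only gives $W(f) \le R(f)/2$, which is too weak. The right move is instead to note that $W(f) \le R(f) \cdot 2^{-1}$ is not what we want; rather, I want to relate $W(f)$ directly to $R$ of a \emph{related} function. The cleaner approach: group the relevant variables of $f$ by their value of $\deg_i(f)$. If $k$ of them have $\deg_i(f)$ equal to some value $e$, then by restricting $f$ appropriately (fixing variables outside a suitable set) one can exhibit a degree-$e$ Boolean function in which all $k$ of these variables remain relevant — hence $k \le R_e \le C_e 2^e \le C_d 2^e$ (using that $C_e$ is nondecreasing). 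Then $W(f) = \sum_e (\#\{i : \deg_i(f) = e\}) \cdot 2^{-e} \le \sum_e C_d 2^e \cdot 2^{-e}$, which does not converge. So grouping crudely fails too; the actual argument must be that the \emph{total} contribution telescopes. I expect the intended proof of $W_d \le C_d$ is: take $f$ of degree $d$ achieving $W_d$; consider a maxonomial hitting structure or induction on $d$, peeling off the variables of top degree $d$ (each contributing $2^{-d}$) and observing the remaining weight is at most $W_{d-1}$, so $W_d \le R_d \cdot 2^{-d} \cdot(\text{top part}) + W_{d-1}$ — combined with $C_d - C_{d-1}$-type bookkeeping this matches $C_d$.

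For the inequality $C_d \le W_d$: given $f$ of degree at most $d$ with $R(f) = R_d$, every relevant variable $x_i$ has $\deg_i(f) \le d$, hence $w_i(f) = 2^{-\deg_i(f)} \ge 2^{-d}$. Therefore $W(f) \ge R(f) \cdot 2^{-d} = R_d \cdot 2^{-d} = C_d$, giving $W_d \ge W(f) \ge C_d$. This direction is genuinely immediate.

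So the real content is the reverse bound $W_d \le C_d$. I expect the main obstacle to be exactly this: showing that replacing the crude weight $2^{-d}$ on every relevant variable by the sharper variable-specific weight $2^{-\deg_i(f)}$ does not actually increase the maximum total weight. The natural tool is induction on $d$ together with a variable-fixing (restriction) argument: choose a variable $x_j$ with $\deg_j(f) = d$, and consider the two restrictions $f_{x_j = 0}$ and $f_{x_j = 1}$, or more cleverly a single restriction that kills all degree-$d$ monomials through a maxonomial hitting set. One shows the weight lost is controlled, and the residual function has degree $\le d-1$ and weight $\le W_{d-1} = C_{d-1}$, while the lost weight is at most $(C_d - C_{d-1})$ worth of top-degree variables — precisely the content packaged by \cref{lemma:main}. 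Thus I anticipate that \cref{prop:W_d} is really proved \emph{in tandem} with \cref{lemma:main}, the two statements feeding each other, and the crux is setting up the restriction so that the degree drops by exactly one while the accounting of removed weight stays tight.
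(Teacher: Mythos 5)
Your easy direction is right and matches the paper: every relevant variable satisfies $\deg_i(f)\leq d$, so $w_i(f)\geq 2^{-d}$, giving $W(f)\geq R(f)2^{-d}$ and hence $W_d\geq C_d$. But for the nontrivial direction $W_d\leq C_d$ you never produce an argument: the grouping attempt is (as you note) abandoned, and the fallback guess --- that \cref{prop:W_d} is proved ``in tandem'' with \cref{lemma:main} by a restriction/hitting-set peeling giving $W_d\leq W_{d-1}+(\text{top part})$ --- does not prove $W_d\leq C_d$. That kind of peeling only bounds $W_d$ in terms of $W_{d-1}$ (it is exactly how the paper later proves $W_d\leq W_{d-1}+\Hit_d2^{-d}$, i.e.\ \cref{lemma:main}); it cannot by itself convert the variable-specific weights $2^{-\deg_i(f)}$ back into the uniform bound $R_d2^{-d}=C_d$, and in the paper the logical order is the reverse of what you suppose: \cref{prop:W_d} is proved first, self-contained, and then fed into the proof of \cref{lemma:main}.

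The missing idea is a short extremal substitution argument. Among Boolean functions of degree at most $d$ with $W(f)=W_d$, choose one maximizing $R(f)$ (this maximum exists since $R(f)\leq 2^dW(f)=2^dW_d$). If some relevant variable had $\deg_i(f)<d$, replace $x_i$ by the AND $y_i\wedge z_i$ of two fresh variables: every monomial through $x_i$ gains one in degree, so the degree stays at most $d$; the single weight $2^{-\deg_i(f)}$ is replaced by two weights $2^{-(\deg_i(f)+1)}$, so $W$ is unchanged; and $R$ increases by one, contradicting the choice of $f$. Hence every relevant variable of this extremal $f$ has $\deg_i(f)=d$, so $W_d=W(f)=R(f)2^{-d}\leq R_d2^{-d}=C_d$. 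Without this (or some equivalent) step your proposal establishes only $C_d\leq W_d$, i.e.\ half of the proposition.
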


\begin{proof}
For any function $f$ of degree $d$, we have $W(f) \geq R(f)2^{-d}$.  Thus $W_d \geq C_d$.

To prove the reverse inequality, let $f$ be a function of degree at most $d$ with $R(f)$
as large as possible subject to $W(f)=W_d$.   We claim that $\deg_i(f)=d$ for all relevant variables.
Suppose for contradiction $\deg_i(f)<d$ for some $x_i$. Let
$g$ be the function obtained by replacing $x_i$ in $f$ by the AND of two new variables
$y_i \wedge z_i$. Then $\deg(g)=\deg(f) \leq d$ and  $W(g)=W(f)$ and $R(g)=R(f)+1$, contradicting
the choice of $f$. Since $\deg_i(f) = d$ for all $i$, we have $W(f)=R(f)2^{-d}=C_d$.
\end{proof}

Therefore to prove \cref{lemma:main} it suffices to prove that $W_d- \Hit_d 2^{-d} \leq W_{d-1}$.

Let $H$ be a maxonomial hitting set for $f$ of minimum size.  Note that 
$\deg_i(f)=d$ for all $i \in H$ (otherwise $H-\{i\}$ is a smaller maxonomial hitting set).
We have: 

\begin{equation}
\label{eqn:W}
W(f)=\sum_i w_i(f)=2^{-d}|H| + \sum_{i \not\in H}w_i(f).
\end{equation}

A {\em  partial assignment} is a mapping 
$\alpha:[n]\longrightarrow \{0,1,*\}$, and $Fixed(\alpha)$ is the set $\{i:\alpha(i) \in \{0,1\}\}$.
For $J \subseteq [n]$, $PA(J)$ is the set of partial assignments $\alpha$ with $Fixed(\alpha)=J$.
The {\em restriction} of $f$ by $\alpha$,  $f_{\alpha}$, is the function on variable set $\{x_i:i \in [n]-Fixed(\alpha)\}$ obtained by
setting $x_i=\alpha_i$ for each $i \in Fixed(\alpha)$. 

\begin{claim}
\label{claim:w_i}
Let $J \subseteq [n]$.  For any $i \not\in J$. 

$$
w_i(f) \leq 2^{-|J|} \sum_{\alpha \in PA(J)} w_i(f_{\alpha})
$$ 
\end{claim}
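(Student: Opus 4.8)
The plan is to show that averaging $\deg_i(f_\alpha)$ over all $\alpha \in PA(J)$ can only decrease (weakly) the degree-in-$x_i$, after accounting for the $2^{-|J|}$ normalization — equivalently, that $\deg_i(f_\alpha) \le \deg_i(f)$ for \emph{every} $\alpha \in PA(J)$. This last inequality is the crux: restricting variables cannot create a higher-degree monomial containing $x_i$ than already existed. First I would verify this monotonicity directly from the multilinear representation. Write $f = \sum_{S} a_S \prod_{j\in S} x_j$. Substituting $x_j = \alpha_j \in \{0,1\}$ for $j \in J$ and collecting terms, every monomial of $f_\alpha$ containing $x_i$ is of the form $\prod_{j \in T} x_j$ with $i \in T \subseteq [n]\setminus J$, and its coefficient is a signed sum of the $a_S$ over sets $S$ with $S \setminus J = T$; in particular such a coefficient can be nonzero only if some $S \supseteq T$ (with $i \in S$) has $a_S \ne 0$, whence $\deg_i(f) \ge |S| \ge |T|$. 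Thus $\deg_i(f_\alpha) \le \deg_i(f)$ whenever $x_i$ is relevant for $f_\alpha$.

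The one subtlety to dispatch is the case where $x_i$ becomes \emph{irrelevant} for $f_\alpha$, so that $\deg_i(f_\alpha)$ and hence $w_i(f_\alpha) = 2^{-\deg_i(f_\alpha)}$ is not defined in the usual way. I would handle this by adopting the convention $w_i(f_\alpha) = 0$ when $x_i$ is irrelevant for $f_\alpha$ (consistent with thinking of $\deg_i$ as $-\infty$ there), which only strengthens the claimed upper bound; alternatively one can note that such terms simply drop out of the sum. With the convention in place, every summand satisfies $w_i(f_\alpha) = 2^{-\deg_i(f_\alpha)} \ge 2^{-\deg_i(f)} = w_i(f)$ whenever $w_i(f_\alpha) \ne 0$.

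Finally I would assemble the bound. There are exactly $2^{|J|}$ partial assignments in $PA(J)$. It suffices to exhibit even a single $\alpha \in PA(J)$ with $w_i(f_\alpha) \ge w_i(f)$, and then the full sum $\sum_{\alpha \in PA(J)} w_i(f_\alpha)$ is at least $w_i(f)$, giving $2^{-|J|}\sum_{\alpha} w_i(f_\alpha) \ge 2^{-|J|} w_i(f)$ — which is weaker than wanted. So instead I rely on the stronger per-assignment inequality above: \emph{for all} $\alpha$, $w_i(f_\alpha) \ge w_i(f)$ provided $x_i$ stays relevant, and to get a nontrivial sum I need at least one such $\alpha$; but in fact the natural route is the reverse direction — since I have $w_i(f_\alpha) \le 2^{-\deg_i(f_\alpha)}$ is the \emph{wrong} direction, let me instead observe that what the claim actually needs is $\sum_{\alpha} w_i(f_\alpha) \ge 2^{|J|} w_i(f)$, i.e. the average of $w_i(f_\alpha)$ is at least $w_i(f)$; this follows immediately from $w_i(f_\alpha) \ge w_i(f)$ holding for every $\alpha$ for which $x_i$ is relevant (from $\deg_i(f_\alpha)\le\deg_i(f)$) together with the fact that for \emph{at least one} $\alpha$ the variable $x_i$ must remain relevant — indeed, since $x_i$ appears in some monomial $\prod_{j\in S}x_j$ of $f$ with $i\in S$ and $\deg_i(f)=|S|$, choosing $\alpha$ to set $\alpha_j=1$ for $j\in S\cap J$ preserves that monomial's contribution at top degree, so $\deg_i(f_\alpha)=\deg_i(f)$ and $w_i(f_\alpha)=w_i(f)$. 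Combining, $\sum_{\alpha\in PA(J)} w_i(f_\alpha) \ge |PA(J)|\cdot w_i(f) = 2^{|J|} w_i(f)$ once we know each term is $\ge w_i(f)$ or $=0$ — wait, that is not enough either, so the clean statement I will prove and use is simply: $w_i(f_\alpha)\ge w_i(f)$ for \emph{every} $\alpha\in PA(J)$ (with the convention $w_i=2^{-\infty}=0$ interpreted as the claim being vacuous there is the only gap). The honest main obstacle, then, is precisely this irrelevant-variable bookkeeping; the degree-monotonicity $\deg_i(f_\alpha)\le\deg_i(f)$ itself is a short polynomial computation. I expect the paper resolves it by simply observing that restriction never increases any $\deg_i$, so $w_i(f_\alpha)\ge w_i(f)$ for all $\alpha$ where $x_i$ survives, and dividing the sum over all $2^{|J|}$ assignments by $2^{|J|}$ — with care that the claim as stated is an upper bound on $w_i(f)$, so in fact the needed inequality is $w_i(f) \le 2^{-|J|}\sum_\alpha w_i(f_\alpha)$, which is exactly "average of restricted weights dominates original weight," and that is what $\deg_i(f_\alpha)\le\deg_i(f)$ delivers.
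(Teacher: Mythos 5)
There is a genuine gap, and it is exactly the point your proposal keeps circling around without resolving: the per-assignment inequality $w_i(f_\alpha)\ge w_i(f)$ is simply false for those $\alpha\in PA(J)$ under which $x_i$ becomes irrelevant, since then $w_i(f_\alpha)=0<w_i(f)$. These terms are not ``vacuous''; they contribute $0$ to the sum, and the claim can only hold because the surviving restrictions overshoot. Concretely, take $f=x_ix_j$ and $J=\{j\}$: then $w_i(f)=\tfrac14$, the restriction $x_j=0$ contributes $0$, and the restriction $x_j=1$ contributes $\tfrac12=2\,w_i(f)$ --- the claim holds with equality only because the surviving branch is exactly doubled. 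Your monotonicity fact $\deg_i(f_\alpha)\le\deg_i(f)$ (which is correct) cannot produce this compensation, and your fallback of exhibiting one $\alpha$ with $w_i(f_\alpha)\ge w_i(f)$ gives only $2^{-|J|}w_i(f)$, as you yourself note. Also, the remark that the convention $w_i(f_\alpha)=0$ for irrelevant $x_i$ ``only strengthens the claimed upper bound'' is backwards: it shrinks the right-hand side, making the inequality you must prove harder, not easier. (A further small flaw: your proposed witness $\alpha$ with $\alpha_j=1$ on $S\cap J$ need not keep $x_i$ relevant at top degree, because coefficients can cancel; e.g.\ $f=x_i(1-x_j)$ restricted by $x_j=1$ is identically $0$.)

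What is missing is the compensation mechanism, and this is how the paper proceeds: induct on $|J|$ one variable at a time, proving $w_i(f)\le\tfrac12\bigl(w_i(f_0)+w_i(f_1)\bigr)$ where $f_0,f_1$ are the restrictions on a single $j\in J$. Writing $f=f_0+x_j(f_1-f_0)$, the case analysis is: if $f_0$ (say) does not depend on $x_i$, then every monomial of $f$ containing $x_i$ also contains $x_j$ and comes from $f_1$, so $\deg_i(f)=1+\deg_i(f_1)$ and $w_i(f)=\tfrac12 w_i(f_1)$ --- the zero branch is exactly offset; if both depend on $x_i$, one checks $w_i(f)\le\tfrac12\bigl(w_i(f_0)+w_i(f_1)\bigr)$ by comparing $\deg_i(f_0)$ and $\deg_i(f_1)$ (using that monomials of $f_0$ cannot be cancelled by monomials containing $x_j$). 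The general $J$ then follows by applying this recursively to $f_0$ and $f_1$. Your global ``restriction never increases $\deg_i$'' observation is true but strictly weaker than this two-branch identity, and it is the identity, not the monotonicity, that makes the average over $PA(J)$ dominate $w_i(f)$.
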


\begin{proof}
  Let $j \in J$ and
 write $f_0$ for the restriction of $f$ by $x_j=0$
and $f_1$ be the restriction of $f$ by $x_j=1$.  
Then $f=(1-x_j)f_0+x_jf_1$. 

We proceed by induction on $|J|$. For the basis case $|J|=1$ we have $J=\{j\}$

\begin{itemize}
\item If $f_0$ does not depend on  $x_i$, then $w_i(f)=w_i(f_1)/2$.
\item If$f_1$ does not depend on $x_i$, then $w_i(f)=w_i(f_0)/2$.
\item Suppose $f_1$ and $f_0$ both depend on $x_i$.  If $\deg(f_0) \neq \deg(f_1)$ then 
$\deg_i(f)=1+\max(\deg_i(f_0),\deg_i(f_1))$ and so $w_i(f) = \frac{1}{2}\min(w_i(f_0),w_i(f_1))$.  If
$\deg(f_0)=\deg(f_1)$ then every monomial of $f_0$ appears in $f=x_j(f_1-f_0)+f_0$ with the same 
coefficient and therefore $w_i(f) \leq w_i(f_0)=\frac{1}{2}(w_i(f_0)+w_i(f_1))$.
\end{itemize}

In every case, we have $w_i(f) \leq (w_i(f_0)+w_i(f_1))/2$, as required.

For the induction step, assume $|J| \geq 2$.   By the case $|J|=1$,
we have $w_i(f) \leq \frac{1}{2}(w_i(f_0)+w_i(f_1))$.  Apply the induction hypothesis
separately to $f_0$ and $f_1$ with the set of variables $J-\{j\}$:

\begin{eqnarray*}
w_i(f) & \leq & \frac{1}{2}(w_i(f_0)+w_i(f_1)) \\
& \leq & \frac{1}{2}\left(\sum_{\beta \in PA(J-\{j\})} w_i(f_{0,\beta})+\sum_{\beta \in PA(J-\{j\})} w_i(f_{1,\beta}) \right)\\
& \leq &\sum_{\alpha \in PA(J)} w_i(f_{\alpha}).
\end{eqnarray*}

\end{proof}

To complete the proof of \cref{lemma:main}, apply \cref{claim:w_i} with $J$ being the minimum size hitting set $H$, and sum over all $i \in [n]-H$ to get:

\[
\sum_{i \in [n]-H}w_i(f) \leq 2^{-|H|} \sum_{i \in [n]-H} \sum_{\alpha \in PA(J)}w_i(f_{\alpha}) \leq
2^{-|H|} \sum_{\alpha \in PA(H)}w(f_{\alpha}) \leq W_{d-1},
\]
since $\deg(f_{\alpha}) \leq d-1$ for all $\alpha \in PA(H)$.

Combining with (\cref{eqn:W}) gives $W_d \leq W_{d-1} + |H|\cdot 2^{-d}$ as required to prove \cref{lemma:main}.

\section{Bounds on $C^*$}
\label{sec:bounds}

\cref{lemma:main} implies that $C_d \leq \sum_{i=1}^{d}2^{-i} \Hit_i$.  Combined with ~\cref{lemma:h_d} gives $C_d \leq \sum_{i=1}^d i^32^{-i}$
and the limiting value $C^{*} \leq \sum_{i=1}^{\infty} i^3 2^{-i}$.  This sum is equal to 26 (which can be shown,
for example, by using $\sum_{i \geq 1} \binom{i}{j}2^{-i} = 2$ for all $j$,
and $i^3=6\binom{i}{3}+6\binom{i}{2}+1$) and thus $C^* \leq 26$.   
As noted in the introduction, $R_d \geq 2^d-1$ and so $C^* \geq 1$.

The best upper and lower bounds we know on $C^*$ are:

\begin{theorem}
\label{thm:C*}
$3/2 \leq C^* \leq 6.614$.
\end{theorem}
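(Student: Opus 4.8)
**Proof proposal for Theorem~\ref{thm:C*} ($3/2 \leq C^* \leq 6.614$).**

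For the lower bound $C^* \geq 3/2$, the plan is to exhibit, for each $d$, a Boolean function of degree $d$ on roughly $3\cdot 2^{d-1}$ variables, so that $C_d = R_d 2^{-d}$ approaches $3/2$. The abstract already advertises a family achieving $3\cdot 2^{d-1}-2$ relevant variables; I would construct it recursively. Starting from a base function of small degree with the right ratio, I would use an amplification step of the form $f \mapsto z f + (1-z) g$ where $g$ is a disjoint copy of $f$ and $z$ is fresh — this doubles the variable count and adds one, raising the degree by one — but seeded so that the additive $-2$ correction stabilizes the ratio at $3/2$ rather than at $1$. One clean choice: take the depth-$d$ read-once formula built by alternating this ``select'' gadget but with a degree-$2$ base function on $3$ variables (e.g.\ $x_1 \oplus (x_2 \wedge x_3)$, which has degree $2$ and $3$ relevant variables, giving ratio $3/4$; iterating the doubling $d-2$ times gives $3\cdot 2^{d-2}$ variables at degree $d$, ratio $3/4$ — not enough, so instead I would use a gadget that at each stage contributes a $+$const rather than merely doubling). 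The correct recursion to target is $R_d = 2 R_{d-1} + 2$ with $R_1 = 1$, whose solution is $R_d = 3\cdot 2^{d-1} - 2$; realizing $R_d \geq 2R_{d-1}+2$ requires a slightly cleverer composition than the textbook $zf+(1-z)g$, presumably gluing two copies of the degree-$(d-1)$ extremal function through a shared gadget on two new variables rather than one. I would verify that the resulting polynomial is genuinely Boolean-valued and has the claimed degree, then conclude $C^* = \lim_d C_d \geq \lim_d (3\cdot 2^{d-1}-2)2^{-d} = 3/2$.

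For the upper bound $C^* \leq 6.614$, I would revisit the telescoping inequality $C^* \leq \sum_{i=1}^\infty \Hit_i 2^{-i}$ from \cref{lemma:main} and improve it by computing $\Hit_i$ exactly (or bounding it sharply) for the first several small values of $i$, rather than using the crude bound $\Hit_i \leq i^3$ uniformly. Note $\Hit_1 = 1$ trivially, and presumably $\Hit_2 = 1$, $\Hit_3$ is some small constant, etc.; for the infinite tail I would keep the $i^3$ bound. Writing $C^* \leq \sum_{i=1}^{k} \Hit_i 2^{-i} + \sum_{i>k} i^3 2^{-i}$ and optimizing over $k$ should already drop the bound well below $26$; to reach $6.614$ one needs either good exact small-case values or a better-than-cubic bound on $\Hit_d$ for moderate $d$. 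In fact I expect the main workhorse is an improved estimate on $h(f)$ itself — perhaps $h(f) \leq c\, d^2$ or even $h(f)=O(d)$ for the relevant regime — feeding a tail like $\sum c i^2 2^{-i}$; alternatively one can bound $\Hit_d 2^{-d}$ directly using a smarter hitting-set argument exploiting that after restricting by a partial hitting set the degree drops, giving a self-improving recursion for $\Hit$.

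The main obstacle is the upper bound. Getting from $26$ down to $6.614$ is not just arithmetic: the bound $\Hit_i \leq i^3$ is too lossy, and either (a) one must determine $\Hit_i$ precisely for enough small $i$ that the partial sum plus cubic tail falls below $6.614$, which requires genuine combinatorial case analysis of small-degree Boolean functions and their maxonomials, or (b) one must prove a structurally better bound on $h(f)$ — e.g.\ near-linear in $d$ — which is itself a nontrivial statement about the intersection pattern of degree-$d$ monomials in a Boolean function. I expect the paper takes route (b) or a hybrid: establish $h(f) = O(d^2)$ or better via a refinement of the argument behind \cref{lemma:h_d}, then plug a quadratic (or linear) tail into the telescoping sum and evaluate. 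The lower bound, by contrast, is a clean explicit construction and should go through once the right doubling gadget achieving $R_d = 2R_{d-1}+2$ is pinned down and checked to be Boolean.
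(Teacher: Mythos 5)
Your upper bound plan contains the genuine gap. The paper does \emph{not} improve the bound $\Hit_i \leq i^3$, nor does it compute $\Hit_i$ exactly for small $i$ (neither of your routes (a) or (b) is available: no $O(d^2)$ bound on $h(f)$ is proved, and even exact small values would not help, since the cubic tail $\sum_{i\geq 5} i^3 2^{-i} \approx 16$ already exceeds $6.614$). The missing idea is much simpler: split the telescoping sum at level $d$ and bound the head not by $\sum_{i\leq d}\Hit_i 2^{-i}$ but by the Nisan--Szegedy bound itself, $C_d \leq d/2$. \cref{lemma:main} gives $C^* \leq C_d + \sum_{i=d+1}^{\infty}\Hit_i 2^{-i} \leq \frac{d}{2} + \sum_{i=d+1}^{\infty} i^3 2^{-i}$ for every $d$, and optimizing over $d$ (the minimum is at $d=12$, where the summand $i^3 2^{-i}$ drops below $1/2$) yields $6.614$. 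You never invoke $C_d \leq d/2$ as the head term, and without it none of your proposed refinements reaches the stated constant.

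For the lower bound you correctly identify the target recursion $R_d \geq 2R_{d-1}+2$ and that it should come from a gadget on \emph{two} new variables joining two disjoint copies, which is exactly the paper's route; but you stop short of exhibiting the gadget, and that is the only nontrivial content of this half. The paper works over $\{-1,1\}$ and sets $\Xi_d(s,t,\vec{x},\vec{y})=\frac{s+t}{2}\Xi_{d-1}(\vec{x})+\frac{s-t}{2}\Xi_{d-1}(\vec{y})$: since exactly one of $\frac{s+t}{2},\frac{s-t}{2}$ is $\pm 1$ and the other is $0$, the value is $\pm$ one of the two copies, hence Boolean, the degree rises by exactly $1$, and all $2l(d-1)+2$ variables are relevant, giving $l(d)=3\cdot 2^{d-1}-2$ and $C^* \geq 3/2$. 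The subtlety you would have had to discover is that a degree-$1$ two-variable multiplexer of this kind only exists once you allow the sign twist in the $\pm 1$ representation; your sketch leaves exactly this verification (Booleanness and degree increment $1$) open.
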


For the upper bound, \cref{lemma:main} implies that for any positive integer $d$,

$$C^* \leq C_d + \sum_{i=d+1}^\infty 2^{-i}h_i.$$ 

Since $C_d \leq d/2$ by the result of Nisan and Szegedy mentioned in the introduction, we have

$$C^* \leq \min_{d} \left( \frac{d}{2}+\sum_{i=d+1}^{\infty} i^32^{-i}\right).$$

The minimum occurs at the largest $d$ for which the summand $d^3 2^{-d}>1/2$ which is 12.
Evaluating the right hand side for $d=12$ gives $C^* \leq 6.614$.

We lower bound $C^*$ by exhibiting, 
for each $d$ a function  $\Xi_d$ of degree $d$ with $l(d)=\frac{3}{2}2^d -2$ relevant variables.
(A similar construction was found independently by Shinkar and Tal \cite{Tal17}.
It is more convenient to switch our Boolean set to be $\{-1,1\}$.

We define $\Xi_d:\{-1,1\}^{l(d)}\rightarrow\{-1,1\}$ as follows. $\Xi_1:\{-1,1\}\rightarrow\{-1,1\}$ is the identity function and for all $d>1$, 
$\Xi_d$ on $l(d)=2l(d-1)+2$ variables is defined in terms of $\Xi_{d-1}$ as follows: $$\Xi_d(s,t,\vec{x},\vec{y})=\frac{s+t}{2}\Xi_{d-1}(\vec{x})+\frac{s-t}{2}
\Xi_{d-1}(\vec{y})$$ for all $s,t\in\{-1,1\}$ and $\vec{x},\vec{y}\in\{-1,1\}^{l(d-1)}$.  It is evident
from the definition that $\deg(\Xi_d)=1+\deg(\Xi_{d-1})$ which is $d$ by induction (as for the base case 
$d=1$, $\Xi_1$ is linear).  It is easily checked that $\Xi_{d}$ depends on all of its variables and that
$\Xi_d(s,t,\vec{x},\vec{y})$ equals
$s*\Xi_{d-1}(\vec{x})$ if $s=t$ and equals $s*\Xi_{d-1}(\vec(y))$ if $s \neq t$, and is therefore Boolean.


\section{Proof of \cref{lemma:h_d}}
\label{sec:h_d}

In this section, we will show that for any Boolean function $f$, $h(f)\le d(f)^3$.

In an unpublished argument, Nisan and Smolensky (see Lemma 6 of \cite{BuhrmanD02}) proved  $\Hit_i \leq d(f)bs(f)$, where $bs(f)$ is the block 
sensitivity of $f$.  A result from \cite{NS94} (see Theorem 4 of \cite{BuhrmanD02}) says $bs(f)\leq 2d(f)^2$, 
which implies that $h(f) \leq 2d(f)^3$.  We now show how to eliminate the factor 2 in the upper bound.

Recall that for Boolean functions $f:\{0,1\}^n\rightarrow\{0,1\}$ and $g:\{0,1\}^m\rightarrow\{0,1\}$, their
{\em composition} $$f\circ g=f(g(t_{1,1},t_{1,2},\ldots),g(t_{2,1},t_{2,2},\ldots),\ldots)$$ is a Boolean function in $mn$ variables with variable set $\{t_{i,j}:i\in[n], j\in[m]\}$.  
We begin by showing that degree and maxonomial hitting set size are multiplicative, i.e.,  $d(f\circ g)=d(f)d(g)$ and 
$h(f\circ g)=h(f)h(g)$.  The former property is well known:  the set of monomials of  $f\circ g$
is the set of all monomials of the form $c_M\prod_{x_i\in M} m_i$, where $M=c_M\prod_{x_i\in M}x_i$ is a monomial of $f(x_1,x_2,\ldots)$ and, for all 
relevant $i$, $m_i$ is a monomial of $g(t_{i,1},t_{i,2},\ldots)$.  The degree of such a monomial is maximized when $M$ and all corresponding $m_i$'s are 
maxonomials, in which case its degree is $\sum_{x_i\in M} d(g)=d(f)d(g).$

We now show that $h(f\circ g)=h(f)h(g)$.  It is easy to check that $S_0=\{(i,j):i\in S_1, j\in S_2\}$ is a maxonomial hitting set of $f\circ g$, where $S_1$ is any maxonomial 
hitting set of $f(x_1,x_2,\ldots)$ and $S_2$ is any maxonomial hitting set of $g(t_{1,1},t_{1,2},\ldots)$; therefore, $h(f\circ g)\le h(f)h(g)$.

We now show that $h(f\circ g)\geq h(f)h(g)$.  Let $S\subseteq\{(i,j): i\in [n], j\in [m]\}$ be a maxonomial hitting set of $f\circ g$.  Let $S_i$ be the set of pairs in $S$ with first coordinate $i$, and let $S'$ be the set of all $i\in [n]$ such that $S_i$ 
is a maxonomial hitting set of $g(t_{i,1},t_{i,2},\ldots)$.  We claim that $S'$ is a maxonomial hitting set of $f(x_1,x_2,
\ldots)$. (Suppose not. Then there is a maxonomial $M_f$ that $S'$ does not cover.  
For each $i$ such that $x_i\in M_f$, there is a maxonomial $M_i$ of $g(t_{i,1},t_{i,2},\ldots)$ that is not hit by $S_i$.  Then, $\prod_{i:x_i\in M_f}M_i$ is a maxonomial of $f\circ g$ that is not hit by $S$.) This implies $|S'|\geq h(f)$. Since  $i \in S'$ implies $S_i\geq h(g)$,  $|S|\geq h(f)h(g)$. Therefore  $h(f\circ g)\geq h(f)h(g)$, and so $h(f\circ g)=h(f)h(g)$.

Returning to the proof of the main result, assume for the sake of contradiction, that there exists a degree $d$ 
Boolean $f$ with  $h(f)\geq d^3+1$.  We have $d>1$, since the only functions with degree $d \leq 1$, the constant and univariate functions, have maxonomial hitting 
set size $d$. Consider the function $F:=\circ^{d^3}f$, the composition of $f$ with itself $d^3$ times.  
By the multiplicative property of Boolean degree and hitting set size, we have $d(F)=d^{d^3}$ and 
$h(F)=(d^3+1)^{d^3}>(d^3)^{d^3}(1+\frac{1}{d^3})^{d^3}> 2(d^3)^{(d^3)}=2d(F)^3$,
contradicting  $h(F)\leq 2d(F)^3$.  Therefore  no such $f$ exists and $h(f)\leq d(f)^3$ for all Boolean $f$.

\section*{Acknowledgements}
We thank Avishay Tal for helpful discussions, and  for sharing his python code for exhaustive search of Boolean functions on few variables. We also thank Yuval Filmus for pointing out the implications for Boolean functions on the slice.  

\bibliographystyle{amsalpha}
\bibliography{bibs}

\providecommand{\bysame}{\leavevmode\hbox to3em{\hrulefill}\thinspace}
\providecommand{\MR}{\relax\ifhmode\unskip\space\fi MR }
\providecommand{\MRhref}[2]{%
  \href{http://www.ams.org/mathscinet-getitem?mr=#1}{#2}
}
\providecommand{\href}[2]{#2}
\begin{thebibliography}{BDW02}

\bibitem[BDW02]{BuhrmanD02}
Harry Buhrman and Ronald De~Wolf, \emph{Complexity measures and decision tree
  complexity: a survey}, Theoretical Computer Science \textbf{288} (2002),
  no.~1, 21--43.

\bibitem[FI18]{FI18}
Yuval Filmus and Ferdinand Ihringer, \emph{Boolean constant degree functions on
  the slice are juntas}, arXiv preprint arXiv:1801.06338 (2018).

\bibitem[HKP11]{hatami2011variations}
Pooya Hatami, Raghav Kulkarni, and Denis Pankratov, \emph{Variations on the
  sensitivity conjecture}, Theory of Computing Library, Graduate Surveys
  \textbf{4} (2011), 1--27.

\bibitem[MP88]{MP88}
M.~Minsky and S.~Papert, \emph{Perceptrons}.

\bibitem[NS94]{NS94}
Noam Nisan and Mario Szegedy, \emph{On the degree of boolean functions as real
  polynomials}, Computational complexity \textbf{4} (1994), no.~4, 301--313.

\bibitem[O'D14]{Ryan14}
Ryan O'Donnell, \emph{Analysis of boolean functions}, Cambridge University
  Press, 2014.

\bibitem[ST17]{Tal17}
Igor Shinkar and Avishay Tal, 2017, Private communication.

\end{thebibliography}

\end{document}